\newtheorem{theorem}{Theorem}{
\newtheorem{lemma}{Lemma}
\theoremstyle{definition}
\newtheorem{definition}{Definition}

\newcommand{\mean}{\mathbb{E}}
\newcommand{\prob}{\mathbb{P}}
\newcommand{\unifdist}{\textsf{Unif}}
\newcommand{\geodist}{\textsf{Geo}}
\newcommand{\ind}{{\mathds{1}}}
\newcommand{\rsoft}{\textsf{R}}
\newcommand{\told}{t_{\text{old}}}


\newcounter{line}

\begin{document}


\title{Fast Perfect Simulation of Vervaat \\
Perpetuities}

\author{Kirkwood Cloud \\ {\tt kacloud@gmail.com} \\[6pt]
Mark Huber \\ {\tt mhuber@cmc.edu}}

\maketitle


\begin{abstract}
This work presents a faster method of simulating exactly from a
distribution known as a 
Vervaat perpetuity.  A parameter of the Vervaat perpetuity is 
$\beta \in (0,\infty)$.
An earlier method for simulating from this distributon
ran in time $O((2.23\beta)^{\beta}).$
This earlier method utilized dominated coupling from the past that bounded
a stochastic process for perpetuities from above.  By extending to
non-Markovian update functions, it is possible to create a new method that
bounds the perpetuities from both above and below.  This new approach
is shown to run in $O(\beta \ln(\beta))$ time.  
\end{abstract}

\paragraph{Keywords} exact simulation; dominated coupling from the 
past
\paragraph{MSC[2010]} 68Q25, 65C05



\section{Introduction}

A perpetuity is
defined as follows.

\begin{definition}
A {\em perpetuity} is a random variable of the form
\begin{equation}
\label{EQN:perpetuity}
Y = W_1 + W_1 W_2 + W_1 W_2 W_3 + \cdots,
\end{equation}
where the $\{W_i\}$ are an independent, identically distributed (iid)
sequence of random variables.
\end{definition}

Suppose that each $W_i$ has the same distribution as $W$ (write $W_i \sim W$). 
Then it 
also holds that $Y \sim W(1 + Y)$ for $Y$ and $W$ independent.  As pointed
out in~\cite{huber2010a}, this distributional identity also
characterizes perpetuities.

Throughout this work it will be assumed that 
$W \geq 0$ (with probability 1) 
and $\mean[W] < 1$.  These two assumptions give that 
$Y$ is nonnegative and finite with probability 1.  In fact, the Monotone
convergence theorem gives that 
\begin{equation}
\label{EQN:mean}
\mean[Y] = \frac{\mean[W]}{1 - \mean[W]}.
\end{equation}

\subsection{Vervaat perpetuities}
The running time of the classic {\tt Quickselect} algorithm of 
Hoare~\cite{hoare1961} for finding order statistics of an unsorted 
set of elements approaches a perpetuity.  If one pivot is chosen, then
asymptotically the running time approaches a perpetuity known as the 
Dickman distribution.  Write $U \sim \unifdist([0,1])$ to mean that $U$
has the uniform distribution over $[0,1]$.

\begin{definition}
The {\em Dickman distribution} is a perpetuity where 
$W \sim \unifdist([0,1])$.
\end{definition}

No closed form for this distribution is known.  
The Dickman distribution also arises in largest prime factors, and in longest
cycles in permutations.  See~\cite{hwangt2002} for more details.  The
Dickman distribution is a special case of the family of Vervaat perpetuities.

\begin{definition}
A {\em Vervaat perpetuity} is a perpetuity where $W_i \sim U^{1/\beta}$ for
some $\beta \in (0,\infty)$ for $U \sim \unifdist([0,1])$.
\end{definition}
%

(Note that some authors define the Dickman distributon as $1 + Y$ for 
$Y$ a Vervaat perpetutiy with $\beta = 1$.)
The first to simulate from the Dickman distribution was Fill, then
Devroye~\cite{devroye2001} followed with a substantially different method
based on envelope refinement of acceptance/rejection.
In~\cite{huber2010a} Fill and the second author improved upon Fill's 
method for the problem 
and applied it to general Vervaat perpetuities. Returning to the 
Dickman distribution, Devroye and Fawzi~\cite{devroyef2010}
improved the algorithm to the point where only 2.32 uniforms were
needed on average to generate one Dickman random variable.  Blanchet
and Sigman~\cite{blanchets2011} applied dominated coupling from
the past to more general perpetutities, but only showed that there
method had finite expected running time~\cite{blanchets2011}.

Returning to the Vervaat class, in the case of large $\beta$ the 
running time in~\cite{huber2010a} 
takes
$O((2.23 \beta)^\beta)$ steps on average.  This is 
fine for the Dickman distribution
where $\beta = 1$, but very bad for general Vervaat perpetuities.

The goal of this work is to develop a faster method for 
simulating random variates from the Vervaat perpetuity distribution,
particularly when $\beta \gg 1$.

\begin{theorem}
\label{THM:main}
The algorithm for generating Vervaat perpetuities from 
Section~\ref{SEC:method}
uses $T$ uniform random variates, where
\[
\mean[T] \leq O(\beta\ln(\beta)).
\]
\end{theorem}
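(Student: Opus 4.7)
The plan is to interpret the algorithm of Section~\ref{SEC:method} as a dominated coupling-from-the-past scheme in which an upper bounding process $U_n$ and a lower bounding process $L_n$ are coupled to the backward recursion $y \mapsto W_n(1+y)$ with $W_n = U_n^{1/\beta}$, and to let $T$ count the uniforms consumed before these bounds certify a draw. Monotonicity of the update gives $U_{n+1}-L_{n+1} = W_n(U_n - L_n)$, so on the log scale the gap performs a random walk with increments $\ln W_n$ having mean $-1/\beta$ and variance $1/\beta^2$. This $1/\beta$ contraction rate is the quantitative engine of the entire bound.

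The first step is to control the initial bracketing $U_0 - L_0$. From \eqref{EQN:mean} with $\mean[W] = \beta/(\beta+1)$ one gets $\mean[Y] = \beta$, and by elementary tail estimates (Markov applied to $e^{\theta Y}$ for small $\theta$, exploiting the distributional identity $Y \sim W(1+Y)$) one may choose an a.s.\ upper envelope of size $M(\beta) = \mathrm{poly}(\beta)$, so that the initial log gap is $O(\ln \beta)$. The second step is to bound the coalescence time. Writing $S_n = \sum_{i=1}^{n} \ln W_i$, the process $S_n + n/\beta$ is a zero-mean martingale with bounded-variance increments, and the hitting time $\tau$ of the level $-c\ln \beta$ satisfies $\mean[\tau] = \beta\cdot O(\ln \beta) = O(\beta \ln \beta)$ by Wald's identity (or optional stopping combined with a Chernoff-type tail on $S_\tau$ to absorb the overshoot). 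Because dominated CFTP doubles the look-back window between failed attempts, the total uniforms consumed across all attempts are within a constant factor of the uniforms used on the successful sweep of length $\tau$.

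The principal obstacle I expect is the non-Markovian nature of the update highlighted in the abstract: in order to track both an upper and a lower envelope simultaneously, the update at step $n$ may need to resample or condition $W_n$ on side information determined by later draws, which can inflate the per-step uniform cost. The delicate piece of the argument is therefore to show that this extra bookkeeping contributes only a constant factor per effective contraction step, rather than an additional logarithmic term. Once that per-step estimate is paired with the martingale-based bound on $\tau$ and the geometric doubling accounting, the resulting inequality $\mean[T] = O(\beta \ln \beta)$ follows.
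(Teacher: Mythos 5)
Your contraction estimate (a log-scale random walk with drift $-1/\beta$, hence $O(\beta\ln\beta)$ steps to shrink the bracket by a $\mathrm{poly}(\beta)$ factor) matches the quantitative core of the paper, which obtains the same rate from $\mean[M'-m'\mid m,M]=\tfrac{\beta}{\beta+1}(M-m)$ together with Markov's inequality. But there is a genuine gap: you never explain how the upper and lower processes become \emph{exactly} equal, and under the update you actually analyze, $y\mapsto W_n(1+y)$, they never do --- the gap $W_1W_2\cdots W_n(U_0-L_0)$ stays strictly positive, so the CFTP certificate $m_0=M_0$ is never produced and the scheme as you describe it does not terminate. Driving the gap below $\beta^{-c}$ does not ``certify a draw''; perfect simulation needs exact coalescence. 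The paper's central device, which you treat only as a bookkeeping nuisance, is precisely the coalescence mechanism: the generalized form of Lemma~\ref{LEM:first} lets the time-$t$ update depend on the current lower bound $m_{t-1}$, so that whenever $U_t(1)\leq((1+m_{t-1})/(1+x))^\beta$ the entire interval $[m_{t-1},M_{t-1}]$ is mapped to the single value $(1+m_{t-1})U_t(2)^{1/\beta}$, while remaining a valid (non-Markovian) update for the target chain dominated by the same random walk. The run-time argument therefore has two stages: first the multiplicative contraction brings $M-m$ below $\tfrac{1}{2}\delta/\beta$ with probability at least $1-\delta/2$ (your random-walk estimate could serve here), and second, once the gap is that small, the coupling event fires in a single step with probability $(1-\tfrac{\delta}{2\beta})^\beta\geq 1-\delta/2$. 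Your proposal omits this second stage entirely, and it is exactly where the $1/\beta$ scale of the required gap --- and hence the $\ln\beta$ factor in the look-back length $\ell$ --- comes from.

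Two smaller problems. There is no almost sure upper envelope of size $\mathrm{poly}(\beta)$: $Y$ is unbounded and the state space is $[0,\infty)$, which is the whole reason a dominating chain is needed; the paper starts the upper process from the dominating random walk in stationarity (mean $x_0+1=O(\beta)$) and controls the initial bracket only with high probability, via Markov's inequality. Also, the claim that doubling the look-back window costs only a constant factor over the successful sweep requires the failure probability to decay rapidly along the doublings (the paper shows failure probability $\delta$ at length $\ell$ improves to at most $\delta^2$ at $2\ell$, so $\sum_i 2^i\,\prob(R\geq i)$ is dominated by a geometric series); a bound on $\mean[\tau]$ via Wald's identity alone does not give this, so you would need the Chernoff-type tail you allude to, made explicit.
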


This work is organized as follows.  Section~\ref{SEC:method} gives
the algorithm and proves correctness.  Section~\ref{SEC:runtime} then
proves the running time bound.  

\section{The method}
\label{SEC:method}

The method used here is a variant of coupling from the past (cftp).
In~\cite{huber2010a}, monotone dominated cftp was employed to draw samples 
from~\eqref{EQN:perpetuity}.  
We begin by 
presenting the intuition behind Propp and Wilson's 
monotone cftp~\cite{proppw1996}.  

\subsection{Monotone cftp}

Recall that a Markov chain
is a stochastic process such that the next state depends only on the current
state, and not on the past history of the process.  That means that the
next state of the process can be determined from the current state and some
independent randomness using a deterministic function called an update
function.

\begin{definition}
For a Markov chain $\{X_i\}$ with state space $\Omega$, 
$\phi:\Omega \times [0,1] \rightarrow \Omega$ is an {\em update function}
if for $U \sim \unifdist([0,1])$, $[X_{t + 1}|X_t = x_t] \sim \phi(x_t,U)$.
\end{definition}

In general update functions can employ much more general randomness than 
a single uniform, such as multiple uniforms or even an iid sequence of 
uniforms.  For notational simplicity this description 
supposes that the randomness comes from a single uniform, but everything
said here applies to the more general case.

Now suppose that $\preceq$ is a partial order on the state space $\Omega$
(so $(a \preceq a)$, $(a \preceq b)$ and $(b \preceq a)$ implies $a = b$, and
$(a \preceq b)$ and $(b \preceq c)$ implies $a \preceq c$.)  

\begin{definition}
An update function is {\em monotonic} for $(\preceq,\Omega)$ if 
\[
(\forall x, y \in \Omega)(\forall u \in [0,1])
  (x \preceq y \Rightarrow \phi(x,u) \preceq \phi(y,u)).
\]

\end{definition}

Let $\pi$ be a distribution over the state space of the Markov chain.  
Then say $\pi$ is stationary if $X_{t} \sim \pi$ implies $X_{t+1} \sim \pi$
as well.
For example, for the Markov chain defined by
$X_{t+1} = U_{t+1}^{1/\beta}(1 + X_t)$ where $U_1,U_2,\ldots$ are iid 
$\unifdist([0,1])$, the stationary distribution of the chain is
the Vervaat perpetuity.
Markov chain Monte Carlo 
takes advantage of the fact that under limited
assumptions, for any fixed $x_0$, the distribution of $X_t$ approaches
$\pi$ as $t$ goes to infinity.  

When applicable, cftp allows the user to directly
simulate from $\pi$, the stationary distribution of the Markov chain.
The intuition is to view the Markov chain 
as running for times starting from the far past, so for times 
$\{\ldots,-3,-2,-1,0\}.$ In other words, it has already been running for
an infinite amount of time up to time $t = 0$.  Having run for
an infinite number of steps, the idea is that 
$X_0$ comes exactly from the stationary distribution of the Markov chain.

More precisely, fix a time $t < 0$.  Suppose
that $M$ is the largest state in the state space, and $m$ the smallest, so
that $m \preceq x \preceq M$ for all $x \in \Omega$.  
Let $m_t = m$ and $M_t = M$.  For $r$ from $t+1$ up to 0, 
let $M_{r} = \phi(M_{r-1},U_r)$
and $m_r = \phi(m_{r-1},U_r)$.    

Since $m_t \preceq X_t \preceq M_t$, a 
simple induction gives that $m_r \preceq X_{r} \preceq M_r$ for all $r$ up to 0.
In particular, if $m_0 = M_0$ then $X_0$ also equals that common value, and 
the algorithm terminates with $X_0 \sim \pi$. 

If $m_0 \neq M_0$, then recursively call the algorithm to obtain 
$X_t$.  Then use $U_{t+1},\ldots,U_0$ to update $X_t$ forward to $X_0$, and output $X_0$.  Either way, the algorithm will output $X_0 \sim \pi$. 

\subsection{Dominated cftp}

The basic monotone cftp method cannot be used here because the state
space is $[0,\infty)$.  So no
state $M$ is an upper bound on $\Omega$.  Kendall and 
M{\o}ller~\cite{kendall1995,kendallm2000} solved this issue by introducing
dominating coupling from the past (dcftp), also known as 
coupling into and from the past.

In their approach, a second update function $\phi_{\text{D}}$ is needed which
dominates the original update function in the sense that
\[
(\forall x,w\in\Omega)(\forall u \in [0,1])(x \preceq w \Rightarrow
  \phi(x,u) \preceq \phi_{\text{D}}(w,u)).
\]
Call the Markov chain created by update function $\phi_D$ the 
{\em dominating chain}.  The chain must have the following properties.
\begin{itemize}
  \item
  The dominating chain must have a stationary distribution $\pi_{\text{D}}$
  \item
  It must be possible to draw from $\pi_{\text{D}}$.
  \item
  It must be possible to run the dominating chain $D_t$ backwards from
  stationarity, that is,
  to draw $D_{t - 1}$ given $D_t \sim \pi$.
  \item
  It must be possible to impute the forward $U_t$ values.  That is,
  using the relationship 
  $D_t = \phi_D(D_{t-1},U_t)$, it must be possible to simulate from
  $[U_t|D_t,D_{t-1}]$.
\end{itemize}

With such a dominating chain in place, dcftp runs as follows.  Assume
$t < 0$ is a parameter given to the program, and that $m$ is a minimum
state of the chain, so $(\forall x \in \Omega)(m \preceq x)$.
\begin{enumerate}
  \item
  Draw $D_0$ from $\pi_{D}$.  Set $t_{\text{old}}$ to 0.
  \item
  For $i$ from $\told-1$ down to $t$, draw $D_i$ given $D_{i+1}$.
  \item
  For $i$ from $t+1$ to $\told$, draw $U_{i}$ to be uniform over $[0,1]$
    conditioned on $\phi_D(D_{i-1},U_i) = D_i$.  
  \item
  Set $M_t$ to $D_t$, and $m_t$ to $m$.
  \item
  For $i$ from $t + 1$ to $0$, let $m_i$ be $\phi(m_{i-1},U_i)$ and 
   $M_i$ be $\phi(M_{i-1},U_i)$
  \item
  If $M_0 = m_0$, then output this common value and quit.  Else,
  set $t_{\text{old}}$ to $t$, $t$ to $2t$, and return to line 2.
\end{enumerate}

Note that by setting $t$ to $2t$ in the last line, the value of $t$ quickly
grows to the size needed to have a good chance that 
$m_0 = M_0$.  Setting $t$ to $t+1$ minimizes the number of random draws that
are generated, but if $\phi$ takes a long time to calculate, this approach
can result in a much longer computational time.

\subsection{The earlier method}
The monotone dominated cftp method of~\cite{huber2010a} operated as follows.
First, the update function used the following fact about 
uniforms raised to powers.

\begin{lemma}  Let $x \geq 0$, and
\label{LEM:first}
$W = U^{1/\beta}$ where $U \sim \unifdist([0,1])$.  
Then 
\[
[(1+x)W|(1+x)W \leq 1] \sim W.
\]
\end{lemma}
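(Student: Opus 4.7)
The plan is to prove the identity directly by computing the conditional cumulative distribution function (CDF) of $(1+x)W$ given the event $\{(1+x)W \leq 1\}$, and showing it matches the CDF of $W$.

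First, I would write down the CDF of $W$. Since $U \sim \unifdist([0,1])$, for any $w \in [0,1]$ we have $\prob(W \leq w) = \prob(U^{1/\beta} \leq w) = \prob(U \leq w^\beta) = w^\beta$. In particular this gives a clean power-law form that I expect to reproduce after conditioning.

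Next, I would translate the conditioning event. The event $(1+x)W \leq 1$ is exactly $W \leq 1/(1+x)$, which has probability $(1/(1+x))^\beta$ by the formula above (note that $1/(1+x) \in [0,1]$ because $x \geq 0$, so the formula applies). Then, for any target value $v \in [0,1]$, I would compute
\[
\prob\bigl((1+x)W \leq v \,\bigm|\, (1+x)W \leq 1\bigr)
= \frac{\prob(W \leq v/(1+x))}{\prob(W \leq 1/(1+x))}
= \frac{(v/(1+x))^\beta}{(1/(1+x))^\beta}
= v^\beta.
\]
This coincides with the unconditional CDF of $W$, proving the distributional identity.

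The only point that requires any care is checking the boundary conditions: namely that $v/(1+x) \in [0,1]$ for $v \in [0,1]$, so that the power-law CDF formula is valid, and that the denominator is strictly positive (which follows since $x < \infty$). I do not foresee an obstacle here; the proof is essentially a one-line calculation exploiting the scale-invariance of the power law $w^\beta$.
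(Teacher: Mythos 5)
Your proof is correct, and it takes a more direct route than the paper does. The paper first proves the claim for $\beta = 1$ (where $(1+x)W$ is uniform on $[0,1+x]$ and one invokes the fact that a uniform conditioned to lie in a subinterval is uniform on that subinterval), and then handles general $\beta$ by raising everything to the power $\beta$ and transferring the $\beta = 1$ conclusion back through the monotone map $u \mapsto u^{1/\beta}$. You instead compute the conditional CDF of $(1+x)W$ for general $\beta$ in one step: using $\prob(W \leq w) = w^\beta$ on $[0,1]$, the event $\{(1+x)W \leq v\}$ is contained in the conditioning event $\{(1+x)W \leq 1\}$ for $v \in [0,1]$, so the ratio $\bigl(v/(1+x)\bigr)^\beta / \bigl(1/(1+x)\bigr)^\beta = v^\beta$ is exactly the unconditional CDF of $W$, and the factors of $(1+x)$ cancel by the scale-invariance of the power law. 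What the direct computation buys is brevity and the elimination of the case split and the change-of-variables step; what the paper's approach buys is that it isolates the intuitive probabilistic fact (conditioned uniforms stay uniform) and reduces everything else to it, which some readers find more transparent than a CDF calculation. Your handling of the boundary details (that $v/(1+x)$ and $1/(1+x)$ lie in $[0,1]$, and that the conditioning event has positive probability) is the right level of care, so there is no gap.
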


\begin{proof}
Suppose $\beta = 1$.  Then $(1 + x)W \sim \unifdist([0,1+x])$.  Conditioning
on $(1 + x)W \leq 1$ is the same as conditioning on $W \leq 1/(1+x)$.  
A uniform conditioned on lying in a smaller space is uniform over that
smaller space, so $[W|(1+x)W \leq 1] \sim \unifdist([0,1/(1+x)])$ which makes 
$[(1+x)W|(1+x)W \leq 1] \sim \unifdist([0,1]) \sim W.$

Now for $\beta \neq 1$.  Note $((1+x)W)^\beta \sim U\cdot (1+x)^\beta$. If
$(1+x)W \leq 1$, then $((1+x)W)^\beta \leq 1$.  From the $\beta = 1$ case,
conditioned on $U(1+x)^\beta \leq 1$, we have 
$U\cdot (1+x)^\beta \sim \unifdist([0,1])$.  So 
\[
[U \cdot (1+x)^\beta|W(1+x) \leq 1] \sim U.
\]
Raising both sides to the $1/\beta$ power then gives the result.
\end{proof}

This gives rise to the following update function.  Draw two uniforms.  
The first uniform $U(1)$ determines if $U(1)^{1/\beta}(1+x) \leq 1$.  If so,
then the second uniform $U(2)$ 
is used to set the next state to $U(2)^{1/\beta}$.
Otherwise $U(1)^{1/\beta}(1+x) > 1$, and the next state should be
$U(1)^{1/\beta}(1+x).$

Let $\ind(\cdot)$ denote the usual indicator function that evaluates
to 1 if the Boolean argument is true, and is 0 otherwise.  
With this notation, let 
$S(x,u) = \ind(u < 1/(1+x)^\beta)$, and then the update function becomes
\begin{equation}
\phi(x,u(1),u(2)) = 
  S(x,u(1))u(2)^{1/\beta} + 
  [1-S(x,u(1))]u(1)^{1/\beta}(1+x).
\end{equation}

The key property of this update function, is that if $u(1) < 1/(1+x)^\beta$,
then the value of $\phi(x,u(1),u(2))$ no longer depends on $x$!  No matter
what $x$ is at that point, $\phi(x,u(1),u(2)) = u(2)^{1/\beta}$.  So this
couples together the process, bringing 
our bounds on $x$, which used to form an interval, to the same value.
The chance that this coupling occurs is simply the chance
that $u(1) < 1/(1+x)^\beta$, which is $1/(1+x)^\beta$.  

Now when $\beta$ is large, $x$ has to be small before this coupling will
occur with reasonable probability.  For $U \sim \unifdist([0,1])$,
$\mean[U^{1/\beta}]=\beta/(1+\beta)$.  Then 
equation~\eqref{EQN:mean} gives that 
the expected value of a draw from the perpetuity 
is $\beta$, 
and 
$\prob(U^{1/\beta}(1+\beta) \leq 1) = (1+\beta)^{-\beta}.$
On average, this event takes $(1+\beta)^\beta$ steps to occur.
This is what leads
to the poor running time for large
$\beta$.

The dominating function for the method is an asymmetric simple 
random walk on the shifted integers
$\{x_0 - 1,x_0,x_0 + 1,\ldots\}$.  For $\beta \in (0,\infty)$, let
\[
x_0 = \frac{1 + (2/3)^{1/\beta}}{1 - (2/3)^{1/\beta}}
\]
and
\[
\phi_{\text{D}}(x,u(1),u(2)) = x + \ind(u(1) > 2/3) - 
  \ind(u(1) \leq 2/3,x \geq x_0)
\]
[Note that this is a slight change from the $x_0$ of~\cite{huber2010a}
that simplifies the algorithm slightly.]

Since the difference between adjacent states of this chain is $\{-1,0,1\}$,
this chain is time reversible, which means that from the stationary distribution
a simulation forward in time has the same distribution as a simulation backwards
in time.  Therefore the only question is in evaluating the forward $U_t$
conditioned on $D_t$ and $D_{t-1}$.  This is easy: conditioned
on $D_t = D_{t-1} + 1$, $U_t \sim \unifdist((2/3,1])$.  Conditioned on
$D_t \leq D_{t-1}$, $U_t \sim \unifdist([0,2/3])$.

\begin{lemma}
For all $x \in [0,y]$, $y \geq x_0 - 1$, $u(1) \in [0,1]$, and $u(2) \in [0,1]$,
\[
\phi(x,u(1),u(2)) \leq \phi_{\text{D}}(y,u(1),u(2)).
\]
\end{lemma}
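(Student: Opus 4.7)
My plan is to verify the inequality by a finite case analysis on how the two update functions evaluate. The right-hand side $\phi_{\text{D}}(y,u(1),u(2))$ has three branches determined by $u(1)$ and $y$: it equals $y+1$ when $u(1) > 2/3$; $y-1$ when $u(1) \leq 2/3$ and $y \geq x_0$; and $y$ otherwise. The left-hand side $\phi(x,u(1),u(2))$ has two branches, determined by the coupling event $u(1) < 1/(1+x)^\beta$. This yields a small number of combined sub-cases, and I would verify the inequality in each.

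The case $u(1) > 2/3$ should be immediate: $\phi_{\text{D}} = y+1$, while $\phi(x,u(1),u(2))$ is at most $\max(u(2)^{1/\beta},\, u(1)^{1/\beta}(1+x)) \leq \max(1,\, 1+y) = 1+y$, using $u(1),u(2) \leq 1$ and $x \leq y$. The substantive case is $u(1) \leq 2/3$ with no coupling, where $\phi = u(1)^{1/\beta}(1+x) \leq (2/3)^{1/\beta}(1+y)$. The choice of $x_0$ is engineered precisely so that $(2/3)^{1/\beta}(1+y) \leq y-1$ is equivalent, after a short rearrangement, to $y \geq x_0$; a parallel computation will show that $(2/3)^{1/\beta}(1+y) \leq y$ is equivalent to $y \geq (x_0-1)/2$, which is implied by the hypothesis $y \geq x_0 - 1$ since $x_0 > 1$. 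Together these handle both $\phi_{\text{D}} = y-1$ (when $y \geq x_0$) and $\phi_{\text{D}} = y$ (when $y < x_0$).

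The remaining sub-case, $u(1) \leq 2/3$ with coupling, is where I expect the main obstacle. Here $\phi = u(2)^{1/\beta}$, whose only deterministic upper bound is $1$, so dominating $\phi$ by $\phi_{\text{D}} \in \{y,\, y-1\}$ requires the stronger assertion $\phi_{\text{D}} \geq 1$ under $y \geq x_0 - 1$. I would handle this by expanding $x_0 = (1 + (2/3)^{1/\beta})/(1 - (2/3)^{1/\beta})$ and noting that in the regime where the algorithm is of interest the formula gives $x_0$ comfortably larger than $2$, so that $y \geq x_0 - 1 \geq 1 + \ind(y \geq x_0)$, closing both branches. Once this final comparison is pinned down, the rest of the argument is a routine, if slightly tedious, case chase.
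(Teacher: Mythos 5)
Your case decomposition is essentially the one the paper itself uses: the branch $u(1) > 2/3$ is immediate, and for $u(1) \le 2/3$ the defining property of $x_0$ gives $(2/3)^{1/\beta}(1+y) \le y-1$ exactly when $y \ge x_0$; your observation that $(2/3)^{1/\beta}(1+y) \le y$ exactly when $y \ge (x_0-1)/2 = (2/3)^{1/\beta}/(1-(2/3)^{1/\beta})$ is a correct (and slightly more general) version of the paper's treatment of the boundary state $y = x_0 - 1$. So the two uncoupled sub-cases match the paper's argument and your algebra checks out.

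The real difference is the coupled sub-case ($u(1) \le 2/3$ with $u(1) < (1+x)^{-\beta}$), which you rightly single out as the obstacle: the paper's proof passes over it silently (it reduces to $x=y$ by monotonicity and then treats $\phi(y,u(1),u(2))$ as if it were $u(1)^{1/\beta}(1+y)$, ignoring the refresh value $u(2)^{1/\beta}$). Your proposed fix needs $\phi_{\text{D}}(y,u(1),u(2)) \ge 1$ for all $y \ge x_0 - 1$, i.e.\ $x_0 \ge 2$, and that is where your plan as written has a gap: $x_0 \ge 2$ holds iff $(2/3)^{1/\beta} \ge 1/3$, i.e.\ iff $\beta \ge \ln(3/2)/\ln 3 \approx 0.37$, so ``comfortably larger than $2$'' is not a property of the formula for all $\beta \in (0,\infty)$; for $\beta \ge 1$ one has $(2/3)^{1/\beta} \ge 2/3$ and hence $x_0 \ge 5$, and there your step closes. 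Moreover, for small $\beta$ no repair is possible, because the stated inequality itself fails pointwise: with $\beta = 0.1$ one gets $x_0 \approx 1.035$; taking $x = y = x_0$, $u(1) = 0.5$ (which triggers coupling since $(1+y)^{-\beta} \approx 0.93$) and $u(2) = 0.99$ gives $\phi = u(2)^{10} \approx 0.90$ while $\phi_{\text{D}} = y - 1 \approx 0.035$. So to complete your proof you must turn ``the regime where the algorithm is of interest'' into an explicit hypothesis (e.g.\ $\beta \ge 1$, which is all the run-time analysis uses, or $\beta \ge \ln(3/2)/\ln 3$) and verify $x_0 \ge 2$ there; as stated for all $\beta > 0$, neither your argument nor the paper's handles this sub-case, and your proposal has the merit of making that explicit.
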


\begin{proof}
Suppose $u(1) > 2/3$ so $D_{t} = D_{t-1} + 1$.  
Then for all $x$ and $u(2)$, it is always true that
$\phi(x,u(1),u(2)) \leq 1 + x$, hence the result
holds.

Next suppose that $u(1) \leq 2/3$.  By the monotonicity of $\phi$,
$\phi(x,u(1),u(2)) \leq \phi(y,u(1),u(2))$, so it suffices to show that
$\phi(y,u(1),u(2)) \leq \phi_{\text{D}}(y,u(1),u(2))$, or equivalently, that
\[
u(1)^{1/\beta}(1 + y) \leq y - 1.
\]
The value of $x_0$ was chosen so that 
this inequality is equivalent to $y \geq x_0$, so the result holds.

The last case to consider is when $y = x_0 - 1$.  Then
$\phi_{\text{D}}(y,u(1),u(2)) = y$.  Again, $x_0$ is large enough that
$(2/3)^{1/\beta}(1+y) \leq y$ when $y = x_0 - 1$, so the result holds
for this last case as well.
\end{proof}

\subsection{The new method}

The new method takes advantage of a simple fact:  dominated coupling from
the past works even if the update function for the underlying chain is 
changing from time step to time step.  The Markov chain itself is
time homogeneous: the distribution of $X_t$ given $X_{t-1} = x$ is
unchanging with $t$.  However, the update function used to move the chain
can be changing from step to step as long as each update function is still
an update function for the original chain, and the dominating chain is 
still dominating at each step.
That is, it is important to have a family of update
functions $\phi_t$ such that:
\[
(\forall t)(\forall x \in \Omega)
 (U \sim \unifdist([0,1]) \rightarrow \phi_t(x,U) \sim [X_{t+1}|X_t = x])
\]
and
\[
(\forall t)(\forall x \preceq w)(\forall u \in [0,1])
 (\phi_t(x,u) \preceq \phi_{\text{D}}(w,u))
\]

To take advantage of this flexibility, we need to generalize 
Lemma~\ref{LEM:first}.

\begin{lemma}
For any $x > 0$ and $a \geq 1$, 
\[
[W(1+x)|W(1+x) \leq a] \sim a U^{1/\beta}
\]
\end{lemma}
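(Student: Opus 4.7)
My plan is to mirror the structure of the proof of Lemma~\ref{LEM:first}, generalizing each step from the threshold $1$ to the threshold $a$. First I will dispose of the $\beta = 1$ case directly, then leverage it via the $\beta$th-power trick to obtain the general statement.

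For the case $\beta = 1$, we have $W \sim \unifdist([0,1])$, so that $W(1+x) \sim \unifdist([0,1+x])$. Assuming $a \leq 1+x$ (the regime in which the conditioning event is not almost sure), conditioning a uniform on a sub-interval gives a uniform on that sub-interval, so $[W(1+x)\,|\,W(1+x) \leq a] \sim \unifdist([0,a])$, which is exactly the distribution of $aU$. This handles the base case.

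For general $\beta$, I will raise to the $\beta$th power. Observe that $(W(1+x))^\beta = U(1+x)^\beta$, and that $\{W(1+x) \leq a\}$ coincides with $\{U(1+x)^\beta \leq a^\beta\}$ since both sides are non-negative and $t \mapsto t^\beta$ is monotone. Setting $x' = (1+x)^\beta - 1$ and $a' = a^\beta$, we have $a' \geq 1$ (because $a \geq 1$), so the $\beta=1$ case applies to $U(1+x')$ with threshold $a'$:
\[
\bigl[U(1+x')\,\bigm|\,U(1+x') \leq a'\bigr] \sim a' U',
\]
where $U' \sim \unifdist([0,1])$. Translating back, this is
\[
\bigl[(W(1+x))^\beta\,\bigm|\,W(1+x) \leq a\bigr] \sim a^\beta U'.
\]
Raising both sides to the $1/\beta$ power (a monotone transformation, so the conditional distribution is preserved) yields $[W(1+x)\,|\,W(1+x) \leq a] \sim a (U')^{1/\beta}$, which is the claim.

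The only subtle point — and really the only place where I expect needing to be careful — is ensuring the conditioning event has positive probability and is not trivially almost sure; this amounts to the implicit regime $a \in [1, 1+x]$ in which the lemma is applied downstream. Beyond that, the argument is a routine reprise of Lemma~\ref{LEM:first}, with the scaling constant $a$ tracked through the $\beta$th-power change of variable.
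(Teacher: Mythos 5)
Your proof is correct and takes essentially the same route as the paper's: both rest on the fact that a uniform conditioned to a subinterval is uniform there, transported through the monotone power map $t \mapsto t^{\beta}$ — you simply organize it as an explicit reduction to the $\beta = 1$ case (mirroring Lemma~\ref{LEM:first}) rather than the paper's one-shot computation with $U_1 = W^{\beta}$. Your explicit caveat that the claim requires $a \leq 1+x$ (otherwise the conditioning is vacuous and the conclusion fails) is a point the paper's statement leaves implicit, and it does hold in the regime where the lemma is applied, since there $a = 1 + m_t$ with $m_t \leq x$.
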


\begin{proof}
Let $U_1 \sim \unifdist([0,1])$
and $W = U_1^{1/\beta}$.  
Then $W(1+x) \leq a \Rightarrow U_1 \in [0,(a/(1+x))^\beta]$.
Let $U_2 \sim \unifdist([0,(a/(1+x))^\beta])$. Then 
$[W(1+x)|W(1+x)\leq a] \sim U_2^{1/\beta}(1+x)$.  For $U_3 \sim \unifdist([0,1])$,
$U_3(a/(1+x))^\beta \sim U_2$.  So 
\[
[W(1+x)|W(1+x)\leq a] \sim (U_3(1/(1+x))^{\beta})^{1/\beta}(1+x) = a U_3,
\]  
which completes the proof.
\end{proof}

Now suppose it is known that $m_t \leq x_t \leq M_t$.  Then the update
function at time $t$ depends on $m_t$.  That is, 
\begin{align*}
\phi_t(x,u(1),u(2)) &= \phi(x,u(1),u(2),m_t) \\
  &= r \cdot (1+m_t)u(2)^{1/\beta}
 + (1 - r) \cdot (1+x)u(1)^{1/\beta}
\end{align*}
where
\begin{align*}
r = r(x,u(1),m_t) &= 
  \ind\left(u(1) \leq \left(\frac{1+m_t}{1+x}\right)^\beta\right).
\end{align*} 

When $m_t = 0$, this is the same as the previous update function.
However, when $m_t > 0$ this can give a much improved chance of coupling
occurring.  Note that the dominating chain for this new method
is the same as the old.  This gives the following procedure for
simulating from Vervaat perpetuities.  Write $X \sim \geodist(p)$
if $\prob(X = i) = p(1-p)^i$ for $i \in \{0,1,\ldots\}$.  Unless 
specifically mentioned, all random variable draws are taken to 
be independent.

\begin{enumerate}
\item
Inputs are $\beta$ (the parameter of the Vervaat family), $\ell$ (the
inital number of steps to run), and an optional input $D_0$ 
(the value of the dominating
chain at time 0).
\item
Initialize by setting $x_0 \leftarrow (1+(2/3)^{1/\beta})/(1 - (2/3)^{1/\beta})$.
\item
If $D_0$ is given as an input to the algorithm, use it, otherwise,
let $D_0 \leftarrow x_0 - 1 + G$, where $G \sim \geodist(1/2)$.
\item
Generate $D_{-1},D_{-2},\ldots,D_{-\ell}$ using a reversible 
asymmetric simple random
walk with partially reflecting boundary at $x_0 - 1$.  That is,
for $t$ from $-1$ down to $-\ell$, draw $A \sim \unifdist([0,1])$,
then let 
\[
D_{t - 1} \leftarrow D_t + \ind(A > 2/3) - \ind(A \leq 2/3,D_t\geq x_0).
\]
\item
Set $m_{-\ell}$ equal to $0$ and $M_{-\ell}$ equal to $D_{-\ell}$.
For $t$ from $-\ell+1$ up to $0$, draw $U_t(1)$ as $\unifdist([0,2/3])$
if $D_{t} \leq D_{t-1}$, or as $\unifdist((2/3,1])$ if 
$D_t > D_{t-1}$.  In either case, draw $U_t(2) \sim \unifdist([0,1])$.  
\item
For $t$ from $-\ell+1$ to $0$, let 
\[
m_{t} \leftarrow \phi(m_{t-1},U_t(1),U_t(2),m_{t-1}),
M_t \leftarrow \phi(M_{t-1},U_t(1),U_t(2),m_{t-1}).
\]
\item
If $m_0 = M_0$, output this common value as the result and quit.  Otherwise,
perform the following steps.
First, 
call the algorithm recursively, with input for the dominating chain of 
$D_{-\ell}$, and time steps $2\ell$ ($\beta$ remains the same).  Call the 
outcome of the recursive call
$Y_{-\ell}$.  Next, 
set $m_{-\ell} \leftarrow 0$ and $M_{-\ell} \leftarrow D_{-\ell}$.  
Then for $t$ from $-\ell+1$ to $0$, let
$m_{t} \leftarrow \phi(m_{t-1},U_t(1),U_t(2),m_{t-1})$,
$M_{t} \leftarrow \phi(M_{t-1},U_t(1),U_t(2),m_{t-1})$, and
$Y_t \leftarrow \phi(Y_{t-1},U_t(1),U_t(2),m_{t-1})$.  
Finally, output $Y_0$ and quit.
\end{enumerate}

This procedure is implemented in \rsoft{} in the appendix.

The new algorithm differs in two key ways from dominated cftp of
the previous section.  First, it maintains both an upper and a lower
bound on the process, thereby speeding coalescence.  Second, since
the algorithm is being called recursively, the values of the 
lower and upper process are reset when the recursive call ends.

So for example, suppose initially $t = 10$.  Then the process is
run over 10 steps, from time $t = -10$ up to $t = 0$.  In the recursive
call, twice as many steps are used, so in this example 20 steps.  This
can be viewed as generating the value of the process from time
$t = -30$ up to $t = -10$.  If coalescence occurs at or before time
$t = -10$, when the process reaches time $-10$, 
$m_{-10}$ is still reset to 0, and $M_{-10}$ is reset to 
the value of the dominating process before the state is run forward
up to time 0.  This ensures that every step is updated according
to the same update function, using the same random choices.

\begin{figure}[ht]
\begin{center}
\includegraphics[height=3in]{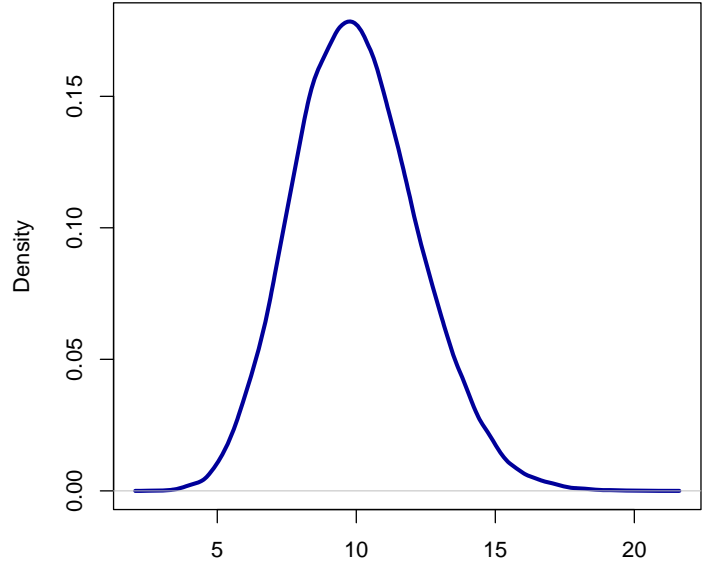}
\caption{Density of Vervaat perpetuity with $\beta = 10$ estimated
from 10,000 samples.  This $\beta$ would have required something like $(22.3)^{10}$ steps for a single sample using the old method.  
With
the new method, on the order of $10\ln(10) \approx 23$ steps per sample were required, and 10,000 samples were generated in 144 seconds using R on a Windows 10 
machine with an
Intel Core i7-6600U CPU at 2.60GHz.
}
\end{center}
\end{figure}

\section{The run time}
\label{SEC:runtime}

Now consider the average number of steps taken by the algorithm.  
This is proportional to the largest value of $\ell$ input to the algorithm,
and so to understand the running time it is necessary to under how large
$\ell$ grows on average.

\begin{lemma}
For $D_0 = x_0 - 1 + G$, and 
$\ell \geq \ln(4\delta^{-2}\beta(x_0+1))(1+\beta) + 1$
 for $\delta > 0$,  
the chance that the algorithm returns $Y$ at the first step is at least
$1 - \delta$.
\end{lemma}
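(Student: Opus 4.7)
The plan is to show $\prob(m_0 = M_0) \geq 1 - \delta$, since the algorithm returns at the first step precisely when $m_0 = M_0$. Writing $\Delta_t := M_t - m_t \geq 0$, coalescence at step $0$ forces $m_0 = M_0$, and its conditional probability given $(m_{-1}, M_{-1})$ is $\alpha_{-1} := ((1+m_{-1})/(1+M_{-1}))^\beta$. So the task reduces to making $\alpha_{-1}$ close to $1$ in expectation, which in turn reduces to controlling $\Delta_{-1}$.

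First I would establish the mean contraction $\mean[\Delta_t] = \tfrac{\beta}{\beta+1}\mean[\Delta_{t-1}]$. The key computation is $\mean[M_t\mid M_{t-1}, m_{t-1}]$. Marginally, $U_t(1) \sim \unifdist([0,1])$ and is independent of the history up to time $t-1$; the dominating-chain construction in the algorithm is designed to reproduce exactly this joint distribution. Splitting the expectation on the event $\{U_t(1) \leq \alpha_{t-1}\}$, the coalescence branch contributes $\alpha_{t-1}(1+m_{t-1})\tfrac{\beta}{\beta+1}$ and the non-coalescence branch contributes
\[
(1+M_{t-1})\int_{\alpha_{t-1}}^1 u^{1/\beta}\,du \;=\; \tfrac{\beta}{\beta+1}\bigl((1+M_{t-1}) - \alpha_{t-1}(1+m_{t-1})\bigr),
\]
using $\alpha_{t-1}^{(\beta+1)/\beta} = \alpha_{t-1}(1+m_{t-1})/(1+M_{t-1})$. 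The two $\alpha_{t-1}(1+m_{t-1})$ contributions cancel, leaving the clean recursion $\mean[M_t\mid M_{t-1},m_{t-1}] = \tfrac{\beta}{\beta+1}(1+M_{t-1})$; the same recursion holds (trivially) for $m_t$. Since $\mean[\Delta_{-\ell}] = \mean[D_{-\ell}] = x_0$ from the stationary representation $D_{-\ell} \sim x_0-1+G$ with $G\sim\geodist(1/2)$, iterating $\ell-1$ times gives $\mean[\Delta_{-1}] \leq (\beta/(\beta+1))^{\ell-1}(x_0+1)$.

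Next I would combine a Markov bound on $\Delta_{-1}$ with a first-order bound on $1-\alpha_{-1}$. For any $\epsilon > 0$, Markov's inequality gives $\prob(\Delta_{-1} > \epsilon) \leq \mean[\Delta_{-1}]/\epsilon$. On the event $\{\Delta_{-1} \leq \epsilon\}$, an application of the mean value theorem to $x \mapsto x^\beta$ on $[1+m_{-1},1+M_{-1}]$ yields $1 - \alpha_{-1} \leq \beta\,\Delta_{-1} \leq \beta\epsilon$, so the conditional probability of failure to coalesce at step $0$ is at most $\beta\epsilon$. Combining,
\[
\prob(m_0 \neq M_0) \;\leq\; \frac{\mean[\Delta_{-1}]}{\epsilon} + \beta\epsilon,
\]
which is minimized at $\epsilon = \sqrt{\mean[\Delta_{-1}]/\beta}$ to give $2\sqrt{\beta\,\mean[\Delta_{-1}]}$.

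It remains to translate $2\sqrt{\beta\,\mean[\Delta_{-1}]} \leq \delta$ into the stated condition on $\ell$. This reduces to $(\beta/(\beta+1))^{\ell-1}(x_0+1) \leq \delta^2/(4\beta)$. Taking logarithms and using the elementary bound $\ln(1+1/\beta) \geq 1/(\beta+1)$ yields $\ell \geq (1+\beta)\ln(4\delta^{-2}\beta(x_0+1)) + 1$, matching the hypothesis. The main obstacle is the conditional-expectation computation above: $U_t(1)$ plays a dual role (deciding coalescence and driving the non-coalescence update) and must be integrated carefully over the two regions, and one must also verify that its marginal uniform law is preserved under the dominating-chain conditioning. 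The clean cancellation that produces the Vervaat recursion for $\mean[M_t]$ is what reduces everything else to a routine Markov-plus-Bernoulli argument.
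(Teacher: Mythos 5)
Your proposal is correct and follows essentially the same route as the paper: contraction of the expected gap $M-m$ by a factor $\beta/(\beta+1)$ per step, a Markov-inequality bound on the gap at time $-1$, and a bound on the non-coalescence probability $1-((1+m)/(1+M))^\beta$ in terms of that gap. The differences are cosmetic: you optimize the Markov threshold (getting $2\sqrt{\beta\,\mean[\Delta_{-1}]}$) where the paper fixes $a=\delta/(2\beta)$ and splits the failure probability as $\delta/2+\delta/2$, you use the mean value theorem where the paper uses $(1-\delta/(2\beta))^\beta\geq 1-\delta/2$, and you spell out the cancellation behind $\mean[M'\mid m,M]=\tfrac{\beta}{\beta+1}(1+M)$ that the paper asserts in one line.
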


\begin{proof}
  The expected value of $D_0$ is $x_0 + 1$.  
  Let $m \leq M$ and consider $m'=\phi(m,U_t(1),U_t(2),m)$,
  $M' = \phi(M,U_t(1),U_t(2),m)$.  Then
  \[
  \mean[M' - m'|m,M] = \mean[U^{1/\beta}(1+M)|M] - \mean[U^{1/\beta}(1+m)|m]
   = \frac{\beta}{\beta+1}(M - m).
  \]
  
  At the beginning of line 6, $M - m = D_{-\ell} - 0$.
  Now let $M_{-1}$ and $m_{-1}$ be the values of $M$ and $m$ 
  at the end of $\ell-1$ steps in line $6$.  Then a simple induction gives
  \[
  \mean[M_{-1} - m_{-1}|D_{-\ell}] = \left(\frac{\beta}{\beta + 1}\right)^{\ell - 1}
    D_{-\ell}.
  \]
  Taking the expected value of both sides and 
  $\beta/(\beta+1) \leq \exp(-1/(\beta + 1))$ gives
  \[
  \mean[M_{-1} - m_{-1}] \leq 
    (x_0  + 1)\exp(-(\ell - 1)/(\beta + 1)).
  \]
  Then  
  \[
  (\forall a > 0)(\prob(M_{-1} - m_{-1} \geq a) \leq 
    a^{-1}(x_0+1)\exp[-(\ell-1)/(\beta+1)])
  \]
  by Markov's inequality.

  When $U_0(1) \leq ((1+m)/(1+M))^\beta$, then $m_0 = M_0$ 
  at the final step, so the next goal is to show that 
  $((1+m)/(1+M))^{\beta}$ is close to 1.  Suppose 
  $((1+m)/(1+M)) \geq 1 - (1/2)\delta/\beta.$  It is 
  easy to show that for $\beta \geq 1$,
  \[
  \left(1 - \frac{\delta}{2\beta}\right)^\beta \geq 1 - 
    \frac{\delta}{2}.
  \]

  Since $(1+m)/(1+M) = 1 - (M - m)/(1+m)$ and $m \geq 0$,
  this means 
  \begin{equation}
  \label{EQN:bound}
  \prob(m_0 = M_0|M - m \leq (1/2)\delta/\beta) \geq 
    1 - \delta/2.
  \end{equation}

  Suppose 
  $\ell \geq \ln(4 \delta^{-2}\beta(x_0+1))(\beta + 1) + 1$.  
  Then the chance that 
  $M - m > (1/2)\delta/\beta$ is at most 
  $2\beta\delta^{-1}(x_0+1)\exp(-\ln(4\delta^{-2}\beta(x_0+1))) = \delta/2$, which together
  with~\eqref{EQN:bound} completes the proof.
\end{proof}

For $\beta \geq 1$, it holds that $\ln(x_0+1) \leq \ln(6\beta)$.
Suppose that 
\[
\ell = 
 (\beta + 1)[2\ln(\delta^{-1}) + \ln(4) + \ln(\beta) + \ln(6\beta)] + 1.
\]
Then it holds that 
\[
2\ell \geq (\beta + 1)[2\ln(\delta^{-2}) + \ln(24) + 2\ln(\beta)]+1.
\]
That is to say, if $\ell$ gives a chance of failure to recurse of 
$\delta$, then $2\ell$ steps gives a chance of failure to recurse of 
at most $\delta^2$.
Hence if the initial $\ell$ satisfies the inequality with $\delta = 1/5$,
then $2\ell$ satisfies the inequality with $\delta = (1/5)^2 = 1/25$, and
so forth.

\begin{lemma}
Let $T$ be the sum of all the values of $\ell$ in the inputs to all
the calls to the algorithm.  Then 
\[
\mean[T] \leq (5/3)((\beta+1)[2\ln(\beta)+\ln(600)] + 1).
\]
\end{lemma}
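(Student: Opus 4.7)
The plan is to combine the preceding lemma with the doubling observation stated in the paragraph just before the statement. First I would identify the constants so that the choice $\delta = 1/5$ in the hypothesis of the preceding lemma produces exactly the claimed threshold: using $\ln(x_0+1) \leq \ln(6\beta)$ (valid for $\beta \geq 1$) and $\ln(4 \cdot 5^2 \cdot 6) = \ln(600)$, the threshold
\[
\ell \geq \ln(4\delta^{-2}\beta(x_0+1))(1+\beta) + 1
\]
becomes $\ell \geq \ell_0 := (\beta+1)[2\ln\beta + \ln(600)] + 1$. Thus a single call with parameter $\ell_0$ fails to return at its first step with probability at most $1/5$.

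Next I would verify that the preceding lemma applies to every recursive call, not just the top-level one. Its hypothesis is that the dominating chain is initialized at $D_0 = x_0 - 1 + G$ with $G \sim \geodist(1/2)$; in the recursive call, the role of $D_0$ is played by $D_{-\ell}$ from the parent call. Since the dominating chain is the reversible asymmetric random walk with partially reflecting boundary, whose stationary distribution is exactly $x_0 - 1 + \geodist(1/2)$, and since $D_0$ is initialized from stationarity, $D_{-\ell}$ is also stationary. Hence the hypothesis is preserved across recursion. Combining this with the text's observation that doubling $\ell$ squares the failure probability, the conditional probability of failing at recursion depth $k$ (given that depth is reached) is at most $(1/5)^{2^k}$.

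Let $R$ denote the depth at which the recursion first succeeds, so that $T = \sum_{k=0}^{R} 2^k \ell_0$. Telescoping the conditional failure bounds,
\[
\prob(R \geq k) \leq \prod_{i=0}^{k-1} (1/5)^{2^i} = (1/5)^{2^k - 1},
\]
and by Fubini,
\[
\mean[T] \leq \ell_0 \sum_{k=0}^{\infty} 2^k (1/5)^{2^k - 1}.
\]

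The remaining task is numerical: bound this series by $5/3$. The $k=0$ term contributes $1$, and for $k \geq 1$ the crude bound $2^k - 1 \geq k$ gives
\[
\sum_{k=1}^{\infty} 2^k (1/5)^{2^k - 1} \leq \sum_{k=1}^{\infty} (2/5)^k = \frac{2/5}{1 - 2/5} = \frac{2}{3},
\]
yielding a total of $5/3$. This delivers $\mean[T] \leq (5/3)\ell_0$, which is the stated inequality. The only delicate point is the reversibility argument that keeps the preceding lemma applicable at every level of recursion; once that is in hand, the series estimate is elementary.
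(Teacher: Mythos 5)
Your route is essentially the paper's: identify the $\delta=1/5$ threshold $\ell_0$ (the bookkeeping $\ln(600)=\ln(4\cdot 5^2\cdot 6)$ and $\ln(x_0+1)\le\ln(6\beta)$ is right), invoke the doubling-squares-$\delta$ observation, and sum a geometric-type series to produce the factor $5/3$. Your point that each recursive call receives a dominating value whose law is again $x_0-1+\geodist(1/2)$ (stationarity plus reversibility of the backward walk) is correct, and it is exactly the ingredient the paper leaves implicit. The one step that is not justified as written is the telescoping bound $\prob(R\ge k)\le\prod_{i=0}^{k-1}(1/5)^{2^i}$: it needs the failure probability at depth $i$ \emph{conditioned on all earlier depths having failed} to be at most $(1/5)^{2^i}$. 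The preceding lemma only bounds the unconditional failure probability when the input dominating value has its stationary law; conditioning on the earlier failures changes the joint law --- in particular it biases the interface value passed to the next call upward, since failures are positively associated with large dominating values --- so stationarity of the marginal law does not deliver the conditional bound you assert.

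The gap closes easily, and closing it recovers what the paper actually uses. You only need the containment $\{R\ge k\}\subseteq\{\text{the depth-}(k-1)\text{ call fails to coalesce}\}$, and the latter is a \emph{marginal} event: the depth-$(k-1)$ call runs $2^{k-1}\ell_0$ steps from an input whose unconditional law is stationary, so the preceding lemma gives $\prob(R\ge k)\le(1/5)^{2^{k-1}}$ for $k\ge 1$. Since $2^{k-1}\ge k$, the same elementary estimate yields $\sum_{k\ge1}2^k(1/5)^{2^{k-1}}\le\sum_{k\ge1}(2/5)^k=2/3$, hence $\mean[T]\le(5/3)\ell_0$ as claimed. (A minor bookkeeping difference: you take the top-level call to have $\ell=\ell_0$, while the paper sums the doubling sequence $2^i$ and bounds the pre-threshold portion separately by $n$; both conventions lead to the stated bound.)
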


\begin{proof}
Let $\ell_i = 2^{i}$, where $i$ is the 
depth of the recursion.  Then if $R$ is the greatest level of recursion
called (and recursion level 0 refers to the initial call to the algorithm),
then 
\[
\mean[T] = \sum_{i=0}^\infty 2^i \cdot \prob(i \leq R).
\]

Let $n = (\beta + 1)[2\ln(5) + \ln(24) + 2 \ln(\beta)] + 1.$  Then
for $i < \log_2(n)$, $\prob(i \leq R) \leq 1$.  The sum of $2^i$ for these
terms is at most $n$.
 
From the previous discussion
$\prob(\lceil \log_2(n) \rceil \leq R) = 1/5$,
$\prob(\lceil \log_2(n) \rceil + 1 \leq R) \leq 1/25$, and so on.  The
sum of these $2^i \prob(i \leq R)$ terms is 
$2n/5+4n/25+\cdots = (2/3)n$.  Therefore $\mean[T] \leq (5/3)n$, which
completes the proof of the bound of the mean.
\end{proof}

Theorem~\ref{THM:main} follows immediately.
The variance can be bounded in a similar fashion.

\begin{lemma}
For $T$ as before,
\[
\mean[T^2] \leq (38/3)((\beta+1)[2\ln(\beta)+\ln(600)] + 1)^2.
\]
\end{lemma}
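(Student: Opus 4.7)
The plan is to mirror the proof of the first-moment bound on $\mean[T]$ by expanding $T^2$ as a double sum and applying the same two-regime estimate on $\prob(i \leq R)$. With $\ell_i = 2^i$ and $R$ the maximum recursion depth, write
\[
T^2 = \Bigl(\sum_{i=0}^R 2^i\Bigr)^{\!2} = \sum_{i,j \geq 0} 2^{i+j}\, \ind(i \leq R)\,\ind(j \leq R) = \sum_{i,j \geq 0} 2^{i+j}\, \ind(\max(i,j) \leq R),
\]
so that $\mean[T^2] = \sum_{i,j} 2^{i+j} \prob(\max(i,j) \leq R)$. Setting $K = \lceil \log_2 n \rceil$ with $n = (\beta+1)[2\ln(\beta)+\ln(600)]+1$, I would reuse the inputs from the previous lemma: $\prob(i \leq R) \leq 1$ for $i < K$, and $\prob(K+k \leq R) \leq 1/5^{k+1}$ for $k \geq 0$.

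Next I would partition the index pairs $(i,j)$ into three regions: (a) both $i,j<K$, (b) exactly one of $i,j$ is $\geq K$, and (c) both $i,j \geq K$. Region (a) contributes a deterministic sum bounded by $(\sum_{i<K}2^i)^2$. Region (b), by symmetry, equals twice the product $(\sum_{i<K} 2^i)(\sum_{j \geq K} 2^j \prob(j \leq R))$; the tail factor on the right is controlled by $2^K/3 \leq 2n/3$ using the series $\sum_{k\geq 0} (2/5)^k = 5/3$ already computed in the proof of the first-moment bound. Region (c), the essential new piece, becomes after substituting $i = K+a$, $j = K+b$,
\[
4^K \sum_{a,b \geq 0} \frac{2^{a+b}}{5^{\max(a,b)+1}},
\]
which I would evaluate by splitting into the diagonal $a=b$ and off-diagonal $a>b$ (then doubling) and summing the resulting geometric series $\sum_k (4/5)^k = 5$ and $\sum_k (2/5)^k = 5/3$. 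These converge precisely because the tail ratio $4/5$ is still strictly below $1$.

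Combining the three contributions and using $2^{K} \leq 2n$ (hence $4^K \leq 4n^2$) gives a bound of the form $c\cdot n^2$. The main obstacle is the bookkeeping required to pin down $c = 38/3$: in contrast to the first-moment ratio $2/5$, which produced a modest tail $(2/3)n$, the ratio $4/5$ here makes region (c) comparable in order to region (a), so no piece is negligible and any slack propagates into the final constant. To squeeze out the exact $38/3$, I expect to use the identity $\mean[T^2] = 3\sum_j 4^j \prob(j \leq R) - 2\mean[T]$, absorbing the $-2\mean[T]$ correction (bounded via the previous lemma) to offset the $-1$ terms arising from the finite geometric sums $\sum_{j=0}^{K-1}4^j = (4^K-1)/3$.
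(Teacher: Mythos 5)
Your setup is sound as far as it goes: the representation $T^2=\sum_{i,j}2^{i+j}\ind(\max(i,j)\le R)$, the three regions, and the tail bound $\prob(K+k\le R)\le 5^{-(k+1)}$ are all correct. But carried to completion the computation does not produce the constant in the statement. Region (a) is at most $4^K\le 4n^2$; region (b) is at most $2\cdot 2^K\cdot(2^K/3)\le (8/3)n^2$; and region (c) is $4^K\sum_{a,b\ge 0}2^{a+b}5^{-(\max(a,b)+1)}=4^K\bigl(1+\tfrac43\bigr)\le(28/3)n^2$, for a total of $16n^2$, versus the claimed $(38/3)n^2\approx 12.7\,n^2$. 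The rescue you propose, the identity $\mean[T^2]=3\sum_j 4^j\prob(j\le R)-2\mean[T]$, is correct (it follows from $T=2^{R+1}-1$), but it cannot close this gap: the subtracted term $2\mean[T]$ is $O(n)$, lower order than $n^2$, so it cannot lower the leading constant from $16$ toward $38/3$; and since $2^K$ can be essentially $2n$, the factor $4^K\le 4n^2$ cannot be improved either. So, as a proof of the lemma as stated, your plan has a genuine gap at the final step: it proves $\mean[T^2]\le 16n^2$, not $(38/3)n^2$.

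You should know, however, that the paper's own proof does not legitimately reach $38/3$ either. It rests on the pointwise bound $T^2\le\sum_i 2^{2i+1}\ind(i\le R)$, which is false already at $R=2$ (there $T^2=49$ while the right-hand side is $2+8+32=42$); your identity $T^2=3\sum_i 4^i\ind(i\le R)-2T$ is the correct replacement and carries an extra factor of $3/2$. (The paper also evaluates $2[(4/5)+(16/25)+\cdots]$ as $10$ rather than $8$.) Redoing the paper's argument with the correct identity yields exactly your $16n^2$. The sensible resolution is to prove the lemma with $16$ in place of $38/3$ (or note that the bound is $16n^2-2\mean[T]$), which costs nothing downstream since only the order $O((\beta\ln\beta)^2)$ is used; no amount of bookkeeping along either your route or the paper's will produce $38/3$.
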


\begin{proof}
Start with $n$ and $R$ as in the previous proof, then 
\[
T^2 = \sum_{i=0}^\infty \sum_{j=0}^i 2\cdot 2^i 2^j \ind(i \leq R)\ind(j \leq R)
 \leq \sum_{i=0}^\infty 2^{2i+1} \ind(i \leq R).
\]
Now the sum of the expectations of the individual terms is 
$2(4/3)n^2$ for $i < \log_2(n)$, and 
$2[(4/5)n^2 + (16/25)n^2 + \cdots] = 10n^2$ for
the rest of the terms, giving the result.
\end{proof}

\section{Acknowledgments}

This work was supported by NSF DMS-1418495.



\appendix

\section{Code}

The following code implements the Vervaat perpetuity algorithm of
Section~\ref{SEC:method} for \rsoft{}
version 3.0.2 (2013-09-25).

\begin{verbatim}
vervaat <- function(beta = 1,steps = 1,d = -1) {
# Written by Kirkwood Cloud and Mark Huber 20 August, 2015

  # Line 2
  x0 <- (1+(2/3)^(1/beta))/(1-(2/3)^(1/beta))
  # Line 3
  if (d == -1) d <- x0 - 1 + rgeom(1,prob=1/2)
  # Line 4
  d <- c(rep(0,steps),d); a <- runif(steps)
  for (t in steps:1) d[t] <- d[t+1]+(a[t]>2/3)-(a[t]<=2/3)*(d[t+1]>=x0)
  # Line 5
  m <- 0;M <- d[1]; u1 <- rep(0,steps); u2 <- runif(steps)
  for (t in 2:(steps+1)) {
    up <- d[t]>d[t-1]; u1[t - 1] <- runif(1,min=2/3*up,max=2/3+1/3*up)
  }
  # Line 6
  for (t in 2:(steps+1)) {
    m <- (1+m)*u2[t-1]^(1/beta) 
    a <- u1[t-1]^(1/beta); s <- (a < ((1+m)/(1+M))); M <- s*m+(1-s)*a*(1+M)
  }
  # Line 7
  if (m == M) return(m) else {
    y <- vervaat(beta,2*steps,d[1]); m <- 0; M <- d[1]
    for (t in 2:(steps+1)) {
      r <- (u1[t-1] < ((1+m)/(1+y))^beta) 
      m <- (1+m)*u2[t-1]^(1/beta); y <- r*m+(1-r)*u1[t-1]^(1/beta)*(1+y)
    }
    return(y)
  }
}
\end{verbatim}

\bibliographystyle{plain}

\end{document}